\numberwithin{theorem}{section}
\newcommand{\Tr}{\operatorname{Tr}}             
\newcommand{\Hil}{\mathscr{H}}                  
\title{Local spectral approximation of unbounded operators: non-asymptotic and unified error quantification for subspace methods}
\author{
    Timothy Stroschein \\
    \small ETH Zurich, Department of Chemistry and Applied Biosciences, \\
    \small Vladimir-Prelog-Weg 2, 8093 Zurich, Switzerland\\
    \texttt{\small timothy.stroschein@phys.chem.ethz.ch}
}
\date{}
\begin{document}

\maketitle

\begin{abstract}
   We introduce a framework for subspace methods which approximate the spectra of self-adjoint, unbounded operators in a local region.
   Using the projection-valued measure, we derive integrated spectral inequalities that also apply to unbounded operators. 
   Our framework is non-asymptotic, gap-independent, and enables a unified error quantification of numerical routines subject to multiple error sources.
   Furthermore, we formalize the class of methods applicable to our framework, and establish a rigorous foundation for 
   \emph{dimension detection in the presence of noise} as solution to frequent numerical artifacts such as spectral pollution. 
   The practical relevance of this non-asymptotic analysis is substantiated by its recent application to sampled prolate filter diagonalization, 
   where it successfully predicted a sharp accuracy transition linking spectral density to the minimal observation time required to decompose a signal.
\end{abstract}

\section{Introduction}

Formulating operator problems on infinite-dimensional Hilbert spaces is foundational to the natural and information sciences~\cite{Reed1980MethodsOM, von_neumann_1927, dirac1930principles}.
However, the computation of quantitative predictions requires the reduction of this formalism to finite-dimensional matrix problems~\cite{Saad2011, Parlett1998}. 
The analytical description of this reduction is particularly difficult for unbounded operators, which are central to quantum mechanics~\cite{reed_iv_1978}.
In numerical practice, approximation schemes such as the Galerkin method suffer from spectral pollution and other artifacts which the theory struggles to explain ~\cite{boulton_spectral_2016, Boffi_2010,  CullumWilloughby1985}.
Therefore, rigorous error quantification in the conversion of infinite, unbounded operator problems into finite matrix problems remains a crucial challenge in computational physics~\cite{stroschein2025groundexcitedstateenergiesanalytic,stroschein2024prolatespheroidalwavefunctions}.

The difficulty lies in the limitations of existing approximation frameworks. 
Originating from functional analysis~\cite{Kato1995}, 
the theories of Chatelin and Babu\v{s}ka--Osborn focus on global operator convergence ($T_{n}\rightarrow T$)~\cite{Chatelin1983, BabuskaOsborn1991}. 
While these yields foundational results on convergence rates for operators with compact resolvent~\cite{BabuskaOsborn1989}, they prioritize abstract conceptual understanding over quantitative approximation. 
And in practice, the global approximation approach is particularly prone to spectral pollution~\cite{Boffi_2010, boulton_spectral_2016}.
In this work we diverge from global convergence analysis. Instead, we consider local spectral approximation of an unbounded, self-adjoint operator $H$ through a fixed trial subspace and seek non-asymptotic error bounds on the approximated eigenvalues.

The class of gap-dependent eigenpair bounds
associated with perturbation theory on large matrices is closer to this setting~\cite[Chapter 5]{StewartSun1990}\cite[Chapter 11]{Parlett1998}\cite[Chapter 4]{Saad2011}.
These bounds originate from quantifying the quality of a trial subspace through its principal angles with respect to the targeted spectral subspace,
as in the classical Davis-Kahn $\sin\Theta$ theorem~\cite{DavisKahan1970} and its recent refinements~\cite{Nakatsukasa2018}.
However, the estimates on the principal angles rely on a spectral gap separating the targeted eigenvalues from the remainder of the spectrum.
This \emph{a priori} assumption on the gap restricts the 
applicability of the theory. Additionally, while numerical stability indeed often relies on the separation of eigenvalues, the gap is not
always an appropriate measure of difficulty and accuracy~\cite{Parlett1996}.
However, the primary limitation of this literature is that it remains rooted in finite-dimensional matrix theory; the
error bounds typically depend explicitly on the operator norm, rendering them inapplicable to unbounded operators.\\

This work addresses the gap between these two approaches and 
provides a framework to describe all errors accumulated
as exact equations of functional analysis are mapped to finite matrix problems. We achieve this by introducing
an error measure defined through the projection
valued measure $P^{(H)}$ of the underlying operator $H$ and derive a sharp inequality on the approximate eigenvalues aligning with numerical intuition.
Our theory is non-asymptotic, gap-independent, and allows us to jointly estimate multiple error sources in addition to the subspace approximation.

To facilitate the transfer of analytical analysis to computational practice, we formalize the class of numerical methods compatible with our framework as \emph{subspace protocols}.
This allows us to identify \emph{accurate dimension detection in the presence of noise}~\cite{di_multiple_1985, WaxKailath1985, zhao_detection_1986, BaikSilverstein2006} as a critical step to ensure stable approximations and to avoid numerical artifacts such as spectral pollution and spurious eigenvalues \cite{Parlett1998, CullumWilloughby1985}.
We derive algebraic conditions guaranteeing accurate dimension detection and convergence.

Our theory is applicable to a wide range of computational fields relying on subspace expansions, as we remain agnostic to the specific mechanism generating the trial vectors. The practical relevance of this non-asymptotic framework was
recently substantiated in the analysis of sampled prolate filter diagonalization~\cite{stroschein2025groundexcitedstateenergiesanalytic, stroschein2024prolatespheroidalwavefunctions}. In that context, the framework successfully predicted a sharp accuracy transition linking spectral density to the necessary observation time for precise frequency resolution -- a result that asymptotic theories cannot capture.

The article is organized as follows. \Cref{sec:descr} formalizes the matrix eigenvalue problem that is the center of this work.
\Cref{sec:inequalities} presents the PVM-based spectral inequalities that bound the eigenvalue error with respect to the underlying operator $H$.
\Cref{sec:SubspaceProtocol_intro} introduces subspace protocols to address spectral pollution and establishes the stability conditions for dimension detection. Finally, \Cref{sec:outlook} concludes with an outlook on future applications.

\section{Problem description and formalism}
\label{sec:descr}

Let $H:\text{dom}H \to \mathscr{H}$ be a self-adjoint operator that is densely defined on a separable Hilbert space $\mathscr{H}$. 
Let $\mathcal{E} \subset \mathscr{H}$ be a spectral subspace of $H$ of dimension $m$.
We want to approximate the eigenvalues of $H$ in $\mathcal{E}$.

Assume a set of \emph{trial vectors} $v_1, \dots , v_m \in \mathcal{Q}(H)$. Define the linear map
\begin{align*}
   V\colon \mathbb{C}^m \to \Hil, b \mapsto \sum_i v_i b_i
\end{align*}
and consider the \emph{generalized eigenvalue problem}:
\begin{align}\label{eq:sep}
  V^\dagger H V b_i = \tilde \lambda_i V^\dagger V b_i
\end{align}
Here $b_i \in \mathbb{C}^m$ and $\tilde \lambda_i \in \mathbb{R}$. 
As Eq.\,\cref{eq:sep} intends to approximate the eigenvalue of $H$ in $\mathcal{E}$, we decompose the guess vector map $V$ into signal and noise
\begin{equation}
\begin{alignedat}{2}
   S &:= P_{\mathcal{E}} V &\qquad\qquad N &:= P_{\mathcal{E}^\perp} V.
\end{alignedat}
\end{equation}
Here, $P_{\mathcal{E}}$ is the orthogonal projection operator onto $\mathcal{E}$ and $\mathcal{E}^{\perp}$ is the orthogonal complement.
To describe the accuracy of the subspace approximation, we introduce an \emph{error measure}
\begin{align}
  \varepsilon^{(V)}_{(H, \mathcal{E})}: \mathcal{B}(\mathbb{R}) \to \mathbb{R},  I \mapsto \Tr [N^\dagger P^{(H)}(I)N],
\end{align}
where $P^{(H)}$ is the projection-valued measure (PVM) such that $ H = \int \lambda d P^{(H)}(\lambda)$. The error measure is finite as the trial vectors $v_i$ are in 
the form domain of $H$. 

As a numerical protocol is typically subject to multiple error sources, we further relax the problem description and assume a generalized eigenvalue problem
\begin{align}\label{eq:geneigprob}
    A b_i = \tilde \lambda_i B b_i 
\end{align}
with 
\begin{align}
   A = V^\dagger H V + \delta A \qquad B = V^\dagger V +  \delta B.
\end{align}
Here $\delta A, \delta B \in \mathbb{C}^{m\times m}$ are self-adjoint matrices describing additional error sources. They may arise when
computing the matrix elements $(v_i, H v_j)$ or by processing noisy data. 
Overall we assume a signal noise decomposition
\begin{align*}
   A = S^\dagger H S + \mathcal{N}^{(A)} \qquad B = S^\dagger S + \mathcal{N}^{(B)}, 
\end{align*}
where $\mathcal{N}^{(A)}= N^\dagger H N + \delta A $ and $\mathcal{N}^{(B)} = N^\dagger N + \delta B$.
To avoid degenerate cases, we further assume that $B$ is positive definite. 
To compress notation we denote generalized eigenvalue problems as in Eq.\,\cref{eq:geneigprob} by $[A,B]$ and call $B$ the \emph{weight matrix}.

\begin{remark}
   We highlight two advantages of our PVM-based error measure over the standard approach of using the residual norm $\|(H-\tilde{\lambda}_i) b_i\|$.
   First, for unbounded operators (e.g. differential operators), trial vectors often lie in the form domain $\mathcal{Q}(H)$ but not in the operator domain $\mathcal{D}(H)$ (e.g. finite elements with limited regularity). In such cases, the residual norm is undefined, whereas our measure remains well-defined.
   Second, the residual is impractical for extensive derivations, as it conflates the spectral location of spurious contributions with their magnitude. 
   In the derivations that follow, the measure $\varepsilon$ allows us to separate the subspace error from a spectral-distance weighted error on the eigenvalues. 
   This yields a structural clarity that is advantageous for subsequent applications and error budgeting.
\end{remark}

\section{Spectral inequalities}
\label{sec:inequalities}
We now present inequalities that bound the error of the approximate eigenvalues through the spectral measure. 
To simplify notation, we denote by $\tilde \lambda_1 \geq \cdots \geq \tilde \lambda_m $ 
the eigenvalues of the generalized eigenvalue problem $[A,B]$, and by $\lambda_1\geq \cdots \geq \lambda_m $ the eigenvalues of $H$ in $\mathcal{E}$. 
Meanwhile, $\lambda_i(M)$ shall denote the $i$-th largest eigenvalue of the self-adjoint matrix $M$.
We also write for the error measure $\varepsilon^{(V)}_{(H, \mathcal{E})}$ simply $\varepsilon$, as the context is clear.
To account for indefinite behavior in error matrices $\delta A - \tilde \lambda_i \delta B$ we use the functions
\begin{align*}
   \lambda_{\min}^*(M):=\min \{\lambda_m(M), 0 \} \quad \text{ and }  \quad \lambda_{\max}^*(M):=\max \{\lambda_1(M), 0 \}
\end{align*}
and give compact spectral inequalities. 

\begin{theorem}
    \label{thm:master}
Let $[A,B]$ be a generalized eigenvalue problem as in \cref{sec:descr}. Assume $[A,B]$ is well conditioned in the sense 
   \begin{align}\label{eq:wellCond}
      \lambda_m(B) > \lambda_1(\mathcal{N}^{(B)}), 
   \end{align}
   where $m = \dim(\mathcal{E})$.
Then the eigenvalues of $[A,B]$ and $H$ in $\mathcal{E}$ have:
    \begin{align}
 \frac{\int_{\lambda < \tilde \lambda_i } (\lambda - \tilde \lambda_i) d \varepsilon(\lambda) + \lambda_{\min}^* (\delta A - \tilde \lambda_i \delta B ) }{\lambda_m(B) - \lambda_1(\mathcal{N}^{(B)})} \leq \tilde \lambda_i -\lambda_i \leq \frac{\int_{\lambda > \tilde \lambda_i } (\lambda - \tilde \lambda_i) d \varepsilon(\lambda) + \lambda_{\max}^* ( \delta A- \tilde \lambda_i  \delta B ) }{\lambda_m(B) - \lambda_1(\mathcal{N}^{(B)})}. \label{eq:main}
    \end{align}
\end{theorem}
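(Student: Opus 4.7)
The plan is to reduce the two-sided bound to a one-vector estimate via the Courant--Fischer min-max principle applied to $[A,B]$, after splitting the quadratic form $b^\dagger(A - \tilde\lambda_i B)b$ into signal, noise, and algebraic-error contributions.

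I would start by upgrading $S := P_\mathcal{E} V$ to an isomorphism. Since $\mathcal{E} \perp \mathcal{E}^\perp$ gives $V^\dagger V = S^\dagger S + N^\dagger N$, one has $S^\dagger S = B - \mathcal{N}^{(B)}$, so Weyl's inequality combined with the well-conditioning hypothesis yields
\begin{align*}
   \lambda_m(S^\dagger S) \geq \lambda_m(B) - \lambda_1(\mathcal{N}^{(B)}) =: c > 0,
\end{align*}
i.e. $\|Sb\|^2 \geq c\,\|b\|^2$ and $S : \mathbb{C}^m \to \mathcal{E}$ is a bijection. The spectral calculus together with the fact that $\mathcal{E}$ reduces $H$ then gives the central identity
\begin{align*}
  b^\dagger(A - \tilde\lambda_i B)\,b = \langle Sb, (H-\tilde\lambda_i) Sb\rangle + \langle Nb, (H-\tilde\lambda_i) Nb\rangle + b^\dagger(\delta A - \tilde\lambda_i \delta B)\,b.
\end{align*}

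For the upper bound I would apply $\tilde\lambda_i = \min_{\dim \mathcal{U} = m-i+1}\max_{b \in \mathcal{U}} R(b)$ to the test subspace $\mathcal{U}_- := S^{-1}(\mathrm{span}(e_i, \ldots, e_m))$, where $e_1, \ldots, e_m$ are the eigenvectors of $H|_\mathcal{E}$ ordered by $\lambda_1 \geq \cdots \geq \lambda_m$. This produces a witness $b^* \in \mathcal{U}_-$ with $b^{*\dagger}(A - \tilde\lambda_i B) b^* \geq 0$, and I would bound each term of the identity at $b^*$ as follows. The signal contribution satisfies $\langle Sb^*, (H-\tilde\lambda_i) Sb^*\rangle \leq (\lambda_i - \tilde\lambda_i)\|Sb^*\|^2$ because $Sb^*$ lies in the span of eigenvectors with eigenvalues $\leq \lambda_i$. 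The noise contribution is dominated by replacing $H - \tilde\lambda_i$ with its positive part and then using $\lambda_1(C) \leq \Tr C$ on the PSD matrix $C = N^\dagger(H-\tilde\lambda_i)_+ N$, giving $\|b^*\|^2 \int_{\lambda > \tilde\lambda_i}(\lambda - \tilde\lambda_i)\,d\varepsilon(\lambda)$. The algebraic term is bounded by $\lambda_{\max}^*(\delta A - \tilde\lambda_i \delta B)\|b^*\|^2$. Combining these, dividing by $\|Sb^*\|^2$, and using $\|b^*\|^2/\|Sb^*\|^2 \leq 1/c$ yields the upper inequality. The lower bound is strictly symmetric: I take $\mathcal{U}_+ := S^{-1}(\mathrm{span}(e_1, \ldots, e_i))$, apply the max-min formula to obtain $b^*$ with $b^{*\dagger}(A - \tilde\lambda_i B) b^* \leq 0$, and use the NSD dual $\lambda_m(C) \geq \Tr C$ on the negative part of $H - \tilde\lambda_i$.

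The main obstacle is precisely this PSD/NSD-to-trace step: it converts a single-vector Rayleigh quantity on $Nb^*$ into the full trace measure $\varepsilon$ and is what ultimately makes the bound insensitive to how the noise distributes across $\mathbb{C}^m$. It works because the spectral splitting makes $N^\dagger(H-\tilde\lambda_i)_\pm N$ semidefinite with a definite sign, so the extremal eigenvalue is controlled by the trace. A subsidiary point is that $H$ is unbounded, so the noise term must always be treated through the spectral integral; the form-domain hypothesis $v_i \in \mathcal{Q}(H)$ is exactly what keeps all quantities finite.
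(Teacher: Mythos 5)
Your proof is correct, and it reaches \cref{thm:master} by a route that is recognizably different in execution from the paper's, even though it uses the same raw ingredients (Weyl plus the well-conditioning hypothesis to get $\lambda_m(S^\dagger S)\geq \lambda_m(B)-\lambda_1(\mathcal{N}^{(B)})=c>0$ and $\operatorname{Ran}S=\mathcal{E}$, the signal/noise/$\delta$-splitting of $A-\tilde\lambda_i B$, the min-max principle, and $\lambda_1\leq\Tr$ on the off-band positive part). The paper proves an intermediate statement (\cref{prop:var_principle}) by applying the min-max principle to the \emph{clean} problem $[S^\dagger HS,S^\dagger S]$, testing it with subspaces spanned by generalized eigenvectors of $[A,B]$, and running a Rayleigh-quotient chain whose denominator $(x,(B-\mathcal{N}^{(B)})x)$ produces the constant $c$; only afterwards is $\lambda_1$ of the matrix-valued integral relaxed to the trace to obtain \cref{thm:master}. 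You instead apply Courant--Fischer to the \emph{perturbed} problem $[A,B]$, with test subspaces $S^{-1}$ of spectral subspaces of $H|_\mathcal{E}$ --- which is the ``exchanged roles'' direction the paper reserves for \cref{thm:other}, where the shift point degrades to $\lambda_i$ and the denominator to $\lambda_m(B)$. You avoid that degradation by not manipulating Rayleigh quotients at all: you extract only the sign of $b^{*\dagger}(A-\tilde\lambda_i B)b^*$ at a witness vector (which is where $\tilde\lambda_i$ enters), bound the three terms of the splitting separately, and divide by $\|Sb^*\|^2\geq c\|b^*\|^2$, going straight to the trace form of the bound and bypassing the intermediate proposition. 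One point you should make explicit (the analogue of the paper's remark that ``the numerator is non-positive''): replacing $\|b^*\|^2/\|Sb^*\|^2$ by $1/c$ is only legitimate because the bracketed error quantity is sign-definite --- nonnegative for the upper bound, nonpositive for the lower --- which is exactly what the truncation to $\lambda_{\max}^*$ and $\lambda_{\min}^*$ guarantees; similarly, a one-line remark that the case $\tilde\lambda_i-\lambda_i$ of the ``wrong'' sign is trivial would close the argument cleanly. These are presentational, not substantive, gaps.
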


This theorem is the central result of our framework and addresses the limitations of classical theory. 
The bound is not directly dependent on an eigenvalue gap, but instead characterized by an 
``off-band" error measure $\varepsilon(\lambda)$.  This PVM-based error measure quantifies the subspace error even with respect to unbounded operators,
which classical matrix norms cannot. The inequalities cleanly separate
subspace error from other error sources (described by $\delta A, \delta B$) arising from additional noise or numerical subroutines. 
Thus, \cref{thm:master} serves as a master theorem to describe the accuracy of \emph{subspace protocols}; a class of numerical methods that we define in \cref{sec:SubspaceProtocol}.
We give the proof of the theorem in \cref{sec:deri_specineq}.

In many applications, the spectral region of interest is specified by an interval. For this case we denote with $\mathcal{E}[a,b]$ the spectral subspace of $H$ that is spanned by all eigenvectors 
with eigenvalues in the interval $[a,b]$. Then we obtain a more practical version of \cref{thm:master}.

\begin{corollary}\label{cor:band}
   Let $[A,B]$ be a generalized eigenvalue problem as in \cref{sec:descr} where the spectral subspace is a band $\mathcal{E} = \mathcal{E}[a,b]$.
   Assume: i) that the generalized eigenvalue problem is well-conditioned as in Eq.\,\cref{eq:wellCond}, ii) that $H$ has no essential spectrum in the interval $[a,b]$, and iii) that $\tilde \lambda_i \in [a,b]$ for all $i$.
   Then the eigenvalues of $[A,B]$ satisfy:
   \begin{align}
      \frac{\int_{\lambda < a } (\lambda - \tilde \lambda_i) d \varepsilon(\lambda) + \lambda_{\min}^* (\delta A - \tilde \lambda_i \delta B ) }{\lambda_m(B) - \lambda_1(\mathcal{N}^{(B)})} \leq \tilde \lambda_i -\lambda_i \leq \frac{\int_{\lambda > b } (\lambda - \tilde \lambda_i) d \varepsilon(\lambda) + \lambda_{\max}^* (\delta A - \tilde \lambda_i \delta B ) }{\lambda_m(B) - \lambda_1(\mathcal{N}^{(B)})}. 
   \end{align}
\end{corollary}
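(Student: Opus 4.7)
The plan is to deduce \cref{cor:band} directly from \cref{thm:master} by showing that, under the stated hypotheses, the error measure $\varepsilon$ places no mass inside $[a,b]$, so the integrals appearing in \cref{eq:main} can be truncated from $\{\lambda<\tilde\lambda_i\}$ and $\{\lambda>\tilde\lambda_i\}$ to $\{\lambda<a\}$ and $\{\lambda>b\}$ respectively. All other ingredients of the master inequality carry over verbatim.

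First, I would use the hypothesis that $H$ has no essential spectrum in $[a,b]$ to identify the spectral projector with the subspace projector: the spectrum of $H$ in $[a,b]$ then consists of finitely many isolated eigenvalues of finite multiplicity, so $P^{(H)}([a,b])$ is precisely the orthogonal projection onto the span of the corresponding eigenvectors, which by definition is $\mathcal{E}=\mathcal{E}[a,b]$. Consequently $P_{\mathcal{E}^\perp}=P^{(H)}(\mathbb{R}\setminus[a,b])$, and since PVMs are multiplicative, for any Borel set $I\subseteq[a,b]$
\begin{align*}
  P^{(H)}(I)\,N \;=\; P^{(H)}(I)\,P^{(H)}(\mathbb{R}\setminus[a,b])\,V \;=\; 0,
\end{align*}
so $\varepsilon(I)=\Tr[N^\dagger P^{(H)}(I)N]=0$. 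Thus $\operatorname{supp}\varepsilon\subseteq\mathbb{R}\setminus[a,b]$.

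Next, I would combine this support statement with the placement assumption $\tilde\lambda_i\in[a,b]$. Since $\tilde\lambda_i\ge a$, the set $[a,\tilde\lambda_i)$ lies in $[a,b]$ and carries no $\varepsilon$-mass, hence
\begin{align*}
  \int_{\lambda<\tilde\lambda_i}(\lambda-\tilde\lambda_i)\,d\varepsilon(\lambda)
  \;=\;\int_{\lambda<a}(\lambda-\tilde\lambda_i)\,d\varepsilon(\lambda).
\end{align*}
Symmetrically, $\tilde\lambda_i\le b$ gives $(\tilde\lambda_i,b]\subseteq[a,b]$, so
\begin{align*}
  \int_{\lambda>\tilde\lambda_i}(\lambda-\tilde\lambda_i)\,d\varepsilon(\lambda)
  \;=\;\int_{\lambda>b}(\lambda-\tilde\lambda_i)\,d\varepsilon(\lambda).
\end{align*}
Substituting these two identities into the numerators of the bounds in \cref{thm:master}, and noting that the well-conditioning hypothesis \cref{eq:wellCond} and the $\delta A,\delta B$ terms are left unchanged, yields the statement of the corollary.

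I do not expect any serious obstacle; the only delicate point is the identification $P_{\mathcal{E}}=P^{(H)}([a,b])$, which genuinely requires the no-essential-spectrum assumption. If essential spectrum were permitted in $[a,b]$, then $\mathcal{E}[a,b]$ (the span of eigenvectors) could be a proper subspace of $\operatorname{Range}P^{(H)}([a,b])$, and $\varepsilon$ might carry mass in $[a,b]$ coming from continuous spectrum, breaking the truncation step. Once this identification is secured, the remainder of the argument is a one-line restriction of integration to the support of the measure.
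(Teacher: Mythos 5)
Your proposal is correct and follows exactly the route the paper intends: assumption (ii) forces $\operatorname{supp}\varepsilon\subseteq\mathbb{R}\setminus[a,b]$ via the identification $P_{\mathcal{E}}=P^{(H)}([a,b])$, and assumption (iii) lets the integration limits in \cref{eq:main} be moved to $a$ and $b$. Your write-up is in fact more careful than the paper's one-sentence justification, correctly isolating the no-essential-spectrum hypothesis as the point where the identification of the spectral projector with $P_{\mathcal{E}}$ could otherwise fail.
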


Assumption \emph{ii)} ensures that the error measure $\varepsilon$ has no support in $[a,b]$, while assumption \emph{iii)} allows the integration bounds of \cref{eq:main} to be lowered to $a$ and increased to $b$ respectively.
\Cref{sec:related_specineq} provides further spectral inequalities related to \cref{thm:master} and \cref{cor:band}.

These spectral inequalities capture the critical property of \emph{energy concentration} to characterize the quality of a trial space. 
While a numerical protocol may not perfectly confine the trial vectors to the 
spectral subspace $\mathcal{E}[a,b]$, we expect an effective routine to generate trial vectors that are highly localized around it. Formally, this implies that the spectral measure of the trial space
\begin{align}
    \alpha(I) := \Tr [V^\dagger P^{(H)}(I) V]
\end{align}
is strongly weighted within the interval $[a,b]$ with fast decreasing tails outside. 
A central goal of a subspace protocol (as defined in \cref{sec:SubspaceProtocol}) is to generate trial spaces $V$ that exhibit this behavior by construction. Since the integrals in \cref{cor:band} weight 
the error measure by the distance to the interval, this concentration ensures small error bounds.

\section{Dimension detection and algorithmic stability}
\label{sec:SubspaceProtocol_intro}

In the previous two sections, we assumed that the dimension of the trial space
coincides with the dimension of the spectral subspace. 
But in practice, the dimension of the spectral subspace is typically not known.

Indeed, across a wide range of fields computational methods exhibit certain artifacts
related to a mismatch in dimensionality,
without being fully explained or predicted by the theory. 
Literature on spectral approximation and finite element methods 
reports on the difficult-to-predict phenomenon of \emph{spectral pollution}, leading to the failure of the Galerkin 
approximation \cite{Boffi_2010, boulton_spectral_2016}.
Statistical learning theory faces the \emph{singularity problem} in a linear discriminant analysis, if the number of data samples 
is smaller than the dimension of the feature space \cite{Sharma2015}. 
Finally, numerical spectral analysis frequently suffers from ill-conditioning and the occurrence of \emph{spurious eigenvalues} in subspace methods \cite{Parlett1998, CullumWilloughby1985}.

Within our framework, \emph{all} these related phenomena are explained by a poor trial space whose dimensionality 
exceeds the dimension of the subspace that is to be approximated.\footnote{ In the context of linear discriminant analysis, our argument considers the feature space of the data (that is yet to be reduced)
as analogous to the trial space.}

Established strategies to address these artifacts include the introduction of additional noise 
to regularize the matrix problem and posterior filtering of spurious eigenvalues \cite[Chapter 4]{CullumWilloughby1985} \cite{Morningstar2024}. 
However, an ill-conditioned eigenvalue of a generalized eigenvalue problem can still cause drastic noise amplification on well-conditioned 
eigenvalues \cite{stewart_perturbation_1978}. In particular, spurious contributions 
can indeed pollute the whole spectrum. Therefore, we suggest abandoning the practice of posterior filtering. 
Instead, we avoid spurious eigenvalues entirely and suggest 
a spectral technique to detect the hidden dimensionality followed by a refinement 
of the trial space.

\subsection{Formalization of subspace protocols}
\label{sec:SubspaceProtocol}
We formalize this solution by introducing \emph{subspace protocols}, a general class of numerical methods compatible with our framework, and specify their algorithmic operation.

\begin{definition}[subspace protocols]
   \label{def:SubspaceProtocol}
    A numerical routine $\mathsf{P}$ is called a \emph{subspace protocol} if: given a self-adjoint operator
    $H$ on a Hilbert space $\Hil$ and a spectral subspace $\mathcal{E}$ of $H$, the protocol $\mathsf{P}$ generates for any given guess dimension $M\in \mathbb{N}$ a generalized eigenvalue problem:
      \begin{align}
   A_M b_k = \tilde \lambda_k B_M b_k \label{eq:GEP_simple},
      \end{align}
      where $A_M, B_M \in \mathbb{C}^{M\times M}$ are self-adjoint matrices, $b_k \in \mathbb{C}^M$ and $\tilde \lambda_k \in \mathbb{R}$.
    For a fixed pair $(H, \mathcal{E})$ we write $\mathsf{P}_{(H, \mathcal{E})}(M)$ for the generalized eigenvalue problem generated by $\mathsf{P}$ at a guess dimension $M$.
\end{definition}

As \cref{def:SubspaceProtocol} shows, this work remains agnostic to the detailed mechanism 
with which a subspace protocol computes the finite matrix problems. 
Analogous to \cref{sec:descr}, we assume a signal-noise decomposition the for generalized eingenvalue problems of a subspace protocol:
\begin{align}
   A_M & = V^\dagger_M H V_M + \delta A_M\\
   B_M & = V^\dagger_M V_M + \delta B_M,
\end{align}
where $V_M:\mathbb{C}^M \rightarrow \Hil $ is a linear map, and $\delta A_M, \delta B_M \in \mathbb{C}^{M\times M}$ are self-adjoint matrices capturing errors
in addition to the subspace projection. 
We define $S_M := P_{\mathcal{E}} V_M$ and $N_M := P_{\mathcal{E}^\perp} V_M$, where $P_{\mathcal{E}}$ is the orthogonal projection operator onto $\mathcal{E}$.
The matrix
\begin{align}
      \mathcal{N}^{(B)}_M  := N_M^\dagger N_M + \delta B_M
\end{align}
quantifies the deviation from the ideal projection onto $\mathcal{E}$ and we call $\mathcal{N}^{(B)}_M$ the \emph{noise weight} at trial dimension $M$.\\

During the operation of a subspace protocol $\mathsf{P}$ the operator $H$ and the target subspace $\mathcal{E}$ remain fixed, while the trial dimension $M$ and 
the assumed dimension $m$ of the spectral subspace $\mathcal{E}$ are variable input parameters. If unknown, we propose 
to detect the dimension of $\mathcal{E}$ through a significant drop in the spectrum of the weight matrix $B_M$
or by applying a noise threshold $\epsilon_{\text{th}} > 0$. \cref{thm:dim_lower_bound} justifies this method. 
In particular, we propose \cref{alg:eps_dim_red} to obtain approximations of the eigenvalues of $H$ in $\mathcal{E}$.

\begin{algorithm}[H]
\caption{Operation of a subspace protocol $\mathsf{P}_{(H, \mathcal{E})}(M, \epsilon_{\text{th}})$}
\label{alg:eps_dim_red}
\begin{enumerate}
    \item Obtain from $\mathsf{P}_{(H, \mathcal{E})}(M)$ the matrices $A_M, B_M \in \mathbb{C}^{M \times M}$, where $M$ is the current trial dimension. Diagonalize $B_M$ and obtain its eigenvalues.
    \item Define the \emph{detected dimensionality} $m$ as the number of eigenvalues of $B_M$ exceeding the threshold $\epsilon_{\text{th}}$. If $m = M$, increase the trial dimension $M$ and repeat the process.
    \item Let $U_m \in \mathbb{C}^{M \times m}$ be the matrix whose columns are the $m$ leading eigenvectors of $B_M$. Compute projected matrices $A_M^m := U_m^\dagger A_M U_m$ and $B_M^m := U_m^\dagger B_M U_m$. The generalized eigenvalue problem $[A_M^m, B_M^m]$ is of dimension $m$ and satisfies $\lambda_m(B_M^m) > \epsilon_{\text{th}}$.
    \item Compute the generalized eigenvalues of $[A_M^m, B_M^m]$. They approximate the eigenvalues of $H$ in the spectral subspace $\mathcal{E}$. 
\end{enumerate}
\end{algorithm}
If in step~2, all eigenvalues of $B_M$ are smaller than $\epsilon_{\text{th}}$, a zero subspace is detected, and we set $m = 0$. This can occur either when the target spectral subspace $\mathcal{E}$ is trivial ($\mathcal{E} = {0}$), or when the weight matrix $B_M$ is too poor to distinguish signal from noise.
The refinement in step $3$ restricts the generalized eigenvalue problem to the well-conditioned subspace of $B_M$. This refinement improves numerical stability and allows to use a trial dimension $M$ exceeding $\dim(\mathcal{E})$.
The accuracy of the approximate eigenvalues is estimated by \cref{thm:master} assuming that $m$ is indeed the dimension of $\mathcal{E}$.
The spectral technique to detect the dimension is motivated from a stronger class of subspace protocols,
that also provides bounds on the noise weights: 

\begin{definition}[$\epsilon$-subspace protocols]
\label{def:eps_SubspaceProtocol}
      Let $\mathsf{P}$ be a subspace protocol as in \cref{def:SubspaceProtocol}, and let 
       $\{\epsilon_M\}_{M=1}^\infty$ be a sequence of non-negative real numbers such that for all pairs 
       $(H,\mathcal{E})$ the noise weights  of $\mathsf{P}_{(H,\mathcal{E})}$ have 
      \begin{align}
         \| \mathcal{N}^{(B)}_M \| \leq \epsilon_M \qquad \text{ for all } M \in \mathbb{N}.  \label{eq:eps_M}
     \end{align}
     Then, we call the pair $(\mathsf{P},\{\epsilon_M\}_{M=1}^\infty )$ an \emph{$\epsilon$-subspace protocol} and denote it by $\mathsf{P}^{\{\epsilon_M\}}$.
\end{definition}

For an $\epsilon$-subspace protocol we define a modification of \cref{alg:eps_dim_red} replacing the detection threshold $\epsilon_{\text{th}}$
by the noise bounds $\epsilon_M$:
\begin{align*}
   \mathsf{P}^{\{\epsilon_M\}}_{(H,\mathcal{E})}(M) := \mathsf{P}_{(H,\mathcal{E})}(M, \epsilon_M)
\end{align*}

For $\epsilon$-protocols we obtain an analytic guarantee on the detected dimension of the spectral subspace. 

\begin{theorem}
    \label{thm:dim_lower_bound}
 The dimension $m$ detected by an  $\epsilon$-subspace protocol  $\mathsf{P}^{\{\epsilon_M\}}_{(H,\mathcal{E})}(M)$ is a lower bound for the true dimension of the spectral subspace $\mathcal{E}$,
    \begin{align}
 \dim \mathcal{E} \geq m. \label{eq:dim_lower_bound}
    \end{align}
\end{theorem}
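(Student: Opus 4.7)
The plan is to reduce the statement to a rank bound on the ideal signal part of the weight matrix combined with Weyl's inequality for Hermitian perturbations. The detected dimension $m$ is defined in step~2 of \cref{alg:eps_dim_red} as the number of eigenvalues of $B_M$ that exceed the threshold, which for an $\epsilon$-subspace protocol is $\epsilon_M$. So the goal reduces to proving $\lambda_{k}(B_M) \leq \epsilon_M$ for every index $k > \dim\mathcal{E}$.

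First, I would invoke the signal--noise decomposition of the weight matrix that was set up in \cref{sec:SubspaceProtocol}, namely
\begin{align*}
   B_M = S_M^\dagger S_M + \mathcal{N}^{(B)}_M.
\end{align*}
The matrix $S_M^\dagger S_M$ is positive semidefinite and, since $S_M = P_{\mathcal{E}} V_M$ has its image contained in $\mathcal{E}$, we have $\mathrm{rank}(S_M^\dagger S_M) = \mathrm{rank}(S_M) \leq \dim\mathcal{E}$. Consequently $\lambda_k(S_M^\dagger S_M) = 0$ for every $k > \dim\mathcal{E}$.

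Next, applying Weyl's inequality to the Hermitian sum $B_M = S_M^\dagger S_M + \mathcal{N}^{(B)}_M$ yields, for every index $k$,
\begin{align*}
   \lambda_k(B_M) \leq \lambda_k(S_M^\dagger S_M) + \lambda_1(\mathcal{N}^{(B)}_M) \leq \lambda_k(S_M^\dagger S_M) + \|\mathcal{N}^{(B)}_M\|,
\end{align*}
where the second step uses self-adjointness of $\mathcal{N}^{(B)}_M$. The defining property \cref{eq:eps_M} of an $\epsilon$-subspace protocol then gives $\|\mathcal{N}^{(B)}_M\| \leq \epsilon_M$, so that for all $k > \dim\mathcal{E}$ we obtain $\lambda_k(B_M) \leq \epsilon_M$, i.e.\ none of these eigenvalues exceeds the threshold.

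Thus the count of eigenvalues of $B_M$ strictly above $\epsilon_M$ is at most $\dim\mathcal{E}$, which is exactly $m \leq \dim\mathcal{E}$. There is no substantial obstacle to this argument; the only subtlety is bookkeeping the rank of $S_M^\dagger S_M$ in terms of the subspace $\mathcal{E}$, which follows immediately from $\mathrm{Im}(S_M)\subseteq\mathcal{E}$, and ensuring that the bound on $\|\mathcal{N}^{(B)}_M\|$ controls $\lambda_1(\mathcal{N}^{(B)}_M)$, which is automatic because $\mathcal{N}^{(B)}_M$ is Hermitian.
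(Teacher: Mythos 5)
Your proof is correct and takes essentially the same route as the paper: both rest on the decomposition $B_M = S_M^\dagger S_M + \mathcal{N}^{(B)}_M$, Weyl's inequality, the bound $\lambda_1(\mathcal{N}^{(B)}_M) \leq \|\mathcal{N}^{(B)}_M\| \leq \epsilon_M$, and the fact that $\operatorname{Ran} S_M \subseteq \mathcal{E}$ controls the rank of the signal Gram matrix. The only difference is that you argue contrapositively (showing $\lambda_k(B_M) \leq \epsilon_M$ for all $k > \dim\mathcal{E}$), whereas the paper argues directly (showing $\lambda_m(S_M^\dagger S_M) > 0$ at the detected $m$), which is the same Weyl estimate read from the other side.
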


The simple proof of this useful statement relies on Weyl's inequality \cref{eq:Weyl}: 
 Let $B_M$ be the weight matrix obtained in step~1 of \cref{alg:eps_dim_red} and let $m$ be the largest index such that $\lambda_m(B_M) > \epsilon_M$. For $m=0$ no signal is detected and \cref{eq:dim_lower_bound} is 
 trivially satisfied. For $m\geq 1$, Weyl's inequality \cref{eq:Weyl} for the eigenvalues of self-adjoint matrices gives
    \begin{align*}
        \lambda_m(S_M^\dagger S_M) & = \lambda_m(B_M - \mathcal{N}_M^{(B)}) \geq \lambda_m(B_M) - \lambda_1(\mathcal{N}_M^{(B)} )> 0.
    \end{align*}
 The last inequality follows from $\lambda_m(B_M) > \epsilon_M$ at the detected dimension $m$. 
 Gram matrices have $\operatorname{rank}(S_M^\dagger S_M)= \dim (\operatorname{Ran} S_M)$ and therefore the range of $S_M$ is at least $m$-dimensional. By construction $\operatorname{Ran} S_M \subset \mathcal{E}$. Thus, $\dim \mathcal{E} \geq m$
and \cref{thm:dim_lower_bound} is proven.
\begin{remark}
In the context of \emph{multiple source detection}, the problem of determining $m$ is known as \emph{model order selection}
and is similarly solved by analyzing the eigenvalues of a sample covariance matrix \cite{zhao_detection_1986,WaxKailath1985}.
The foundational approach in that field is to apply the Akaike information criterion (AIC) or the minimum description length (MDL) to find a
most likely $m$. Through our framework this statistical problem finds 
a deterministic analog in the context of operator approximation. 
\cref{thm:dim_lower_bound} provides a criterion towards rigorous dimension detection based on norm bounds 
on noise weights. This allows spectral dimension detection techniques to be applied to a wider range of numerical applications
where statistical assumptions do not necessarily apply. 
\end{remark} 

Moving beyond the lower bound of \cref{thm:dim_lower_bound}, we derive an algebraic condition to guarantee that the detected dimension coincides with 
$\dim \mathcal{E}$. 
For \cref{alg:eps_dim_red} to correctly identify $m^*:= \dim \mathcal{E} $, the $m^*$-th eigenvalue of the weight matrix must exceed the noise threshold. 
Using Weyl's inequality we obtain a sufficient condition on the \emph{signal strength} $\lambda_{m^*}(S_M^\dagger S_M)$:
\begin{align*} 
    \lambda_{m^*}(B_M) & \geq \lambda_{m^*}(S_M^\dagger S_M) + \lambda_{m^*}(\mathcal{N}_M^{(B)}) \stackrel{!}{>} \epsilon_M. 
\end{align*} 
In the case of pure subspace noise ($\mathcal{N}_M^{B} = N_M^\dagger N_M \geq 0$), this leads to requiring $\lambda_{m^*}(S_M^\dagger S_M) > \epsilon_M$. In the general case, 
where the noise is indefinite but bounded by $\| \mathcal{N}_M^{B} \| \leq \epsilon_M $, this threshold is lifted to $2 \epsilon_M$.
Overall, we identify
\begin{align} \label{eq:detect_cond}
   \lambda_{m^*}( S_M^\dagger S_M) \stackrel{!}{\gtrsim} \epsilon_M 
\end{align}
as a sufficient algebraic condition for correct dimension detection. 
Conversely, \cref{eq:detect_cond} is also a necessary condition for the trial space to exhibit high energy concentration and to ensure small error bounds in \cref{sec:inequalities}.

\begin{remark}
   When discussing sufficient conditions for accurate dimension detection, we also point to the spike model of random matrix theory \cite[Section 3.3]{BandeiraSingerStrohmer2025} \cite{BaikSilverstein2006}.
   Here $B_M$ is a signal covariance matrix $S_M^\dagger S_M$ that is perturbed by 
   a random matrix $\mathcal{N}_M^{B}$ describing noise. The eigenvalues of $\mathcal{N}_M^{B}$ are random variables characterized by a distribution (such as Marchenko-Pastur) and
   the edge of this distribution function corresponds to $\epsilon_M$. The BBP phase transition characterizes the necessary signal strength $\lambda_{m^*}(S_M^\dagger S_M)$ such that $B_M$ exhibits indeed eigenvalues 
   spiking out of the threshold $\epsilon_M$  \cite{BaikBenArousPeche2005}. However, these results are asymptotic $(M \to \infty)$ and assume a uniform noise distribution, which is often impractical towards applications. 
\end{remark}

\begin{remark}
   Eq.\,\cref{eq:detect_cond} serves a practical starting point to derive stability conditions for subspace protocols. 
   In the special case of filter diagonalization, Eq.\,\cref{eq:detect_cond} was used to derive a minimal amplitude guaranteeing that the correct number of frequencies is detected \cite[Section\,2.3]{stroschein2025groundexcitedstateenergiesanalytic} \cite[Section\,5.3.3]{stroschein2024prolatespheroidalwavefunctions}.
   Through an alternant matrix this condition is also related to the density of frequencies. 
   This provides a deterministic parallel to the performance analysis of signal processing routines such as MUSIC and ESPRIT. 
   Guarantees in that field are derived from a probabilistic noise model and give asymptotic results on the eigenvalue estimates. 
   However, the statistical analyses \cite{StoicaNehorai1989, RaoHari1993} similarly conclude that the stability is dependent on the signal to noise ratio (analogous to our minimal frequency-amplitude) 
   and source separation (our density of frequencies). 
   Modern stability analyses also use the properties of a Vandermonde matrix (in place of our alternant matrix) to characterize the source separation condition non-asymptotically \cite{LiaoFannjiang2016}. 
   Future work could focus on formally embedding MUSIC and ESPRIT within our generalized framework to enable a more direct comparison of the methods. This would also bridge the gap between deterministic and asymptotic error quantification. 
\end{remark}

To improve the reliability of the dimension detection and the accuracy of the eigenvalues, we recommend to choose a larger trial dimension $M$ exceeding
$\dim(\mathcal{E})$. A larger trial space is more expressive and can thus better represent the spectral subspace. To make this point more formal consider the algorithm $\mathsf{P}_{(H, \mathcal{E})}(M, m)$ 
that takes the assumed dimension $m$ of $\mathcal{E}$ directly as input; thus $\mathsf{P}_{(H, \mathcal{E})}(M, m)$  is identical to \cref{alg:eps_dim_red}
up to omitting step\,2. Then we have: \emph{the conditioning $\lambda_m(B_M^m)^{-1}$ of a generalized eigenvalue problem generated by $\mathsf{P}_{(H, \mathcal{E})}(M, m)$ is monotone decreasing in the guess dimension $M$.}\\
This statement follows from the fact that for all trial dimensions $M,M'$ with $M' > M$ the weight matrix $B_M$ is the $M$-th leading principal submatrix of the weight matrix $B_{M'}$.
Then the variational principle \cref{eq:maxmin} has 
\begin{align*}
   \lambda_m(B_M) & = \max_{S_m \subset \mathbb{C}^{M}} \min_{x \in S_m} \frac{(x,B_M x)}{(x,x)} = \max\limits_{\substack{S_m \subset \mathbb{C}^{M'} \\ S_m \subset [e_1,\dots,e_M]}} \min_{x \in S_m} \frac{(x,B_{M'} x)}{(x,x)}\\
   & \leq  \max\limits_{\substack{S_m \subset \mathbb{C}^{M'} }} \min_{x \in S_m} \frac{(x,B_{M'} x)}{(x,x)} = \lambda_m(B_{M'}).
\end{align*}
In particular, the conditioning $\lambda_m(B_M)^{-1}$ of $\mathsf{P}_{(H, \mathcal{E})}(M, m)$ is monotone decreasing in the guess dimension $M$.

However, the advantage of increasing $M$ is not unconditional. Contributions from noise (captured by $\epsilon_M$) are typically also increasing with $M$. 
To address this conflict of interest, we propose informing the choice of $M$ through a noise-to-signal ratio defined as  $\epsilon_M / \lambda_m(B_M^m)$.
The number of guess vectors should be increased only as long as this ratio continues to decrease significantly. A decreasing noise-to-signal ratio indicates that the improved signal representation (captured by $\lambda_{m}(S_M^\dagger S_M)$) outweighs the adverse effect of increased noise.

Finally, once the correct dimension $m$ is detected and the generalized eigenvalue problem $[A_M^m, B_M^m]$ satisfies $\lambda_m(B_M^m) > \| \mathcal{N}^{(B)}_M \|$, \cref{thm:master} estimates the accuracy of the computed eigenvalues. 
Strictly speaking \cref{thm:master} applies with respect to the error measure $\varepsilon^{(V_M^m)}_{(H, \mathcal{E})}$ of the refined trial space $V_M^m = V_M U_m$. However, the refined error measure is estimated by the unrefined:
\begin{align*}
   \underbrace{\Tr [ (P_{\mathcal{E}^\perp} {V_M^m})^\dagger  P^{(H)}(I) (P_{\mathcal{E}^\perp} V_M^m) ]}_{\varepsilon^{(V_M^m)}_{(H, \mathcal{E})}} \leq \underbrace{ \Tr [ (P_{\mathcal{E}^\perp} {V_M})^\dagger  P^{(H)}(I) (P_{\mathcal{E}^\perp} V_M) ]}_{\varepsilon^{(V_M)}_{(H, \mathcal{E})}}
\end{align*}
for all $I \in \mathcal{B}(\mathbb{R})$. This follows directly from the definition of $V_M^m$ as a projection of $V_M$.
Thus, the inequalities of \cref{sec:inequalities} also hold with respect to the unrefined error measure.

\section{Conclusion and outlook}
\label{sec:outlook}

In this work, we introduced a non-asymptotic approximation framework for subspace methods that diverges from classical gap-dependent (Davis-Kahn) and operator-convergence (Kato/Chatelin) theories. Our contributions include: (1) a PVM-based 
error measure and a master theorem quantifying multiple error sources for unbounded operators; and (2) a formal framework for subspace protocols that provides a rigorous foundation for \emph{spectral dimension detection}, resolving 
frequent artifacts such as spectral pollution and enabling non-asymptotic stability analysis. 

This framework reveals that accurate local approximation hinges on effective distillation of spectral degrees of freedom from noise.
By amplifying the targeted subspace and suppressing spurious contributions, a subspace protocol ensures reliable dimension detection and yields ``good'' trial spaces.
The quality of the trial space is characterized by its concentration in the spectral region of interest.
Under the PVM-based error measure, such concentration directly translates to
sharp error bounds in \cref{sec:inequalities}. 

A long-term goal is to develop a unified approximation framework describing all error contributions, spanning from infinite-dimensional formalism to finite-precision arithmetic. Such a theory would inform the optimal allocation of computational resources
and facilitate efficient algorithms operating with limited degrees of freedom. The work presented here contributes to this goal by offering a common language to unify results from various approximation routines in one framework. In particular, the new error measure and integrated spectral
inequalities appear to be essential in bridging the gap between the rigorous formalism of unbounded operators and the practical realities of numerical computation.

This approach has already yielded practical results in the context of signal processing and quantum computation. In the analysis of sampled prolate filter diagonalization, it provided an unprecedented understanding of how computational accuracy and spectral density relate to
necessary observation time. This result can now inform the minimal simulation time required for larger approximation routines, 
particularly those involving a quantum device~\cite{stroschein2025groundexcitedstateenergiesanalytic, Reiher2017}.

Given its generality, this formalism allows for application to a wide range of local approximation methods. Other candidates for future analysis as subspace protocols include:

\begin{itemize}
   \item Krylov subspace methods (e.g. Lanczos/Arnoldi): Our framework offers a new perspective on two challenges. First, for 
   implementations without full reorthogonalization, spectral dimension detection (Alg. 1) can be applied to the weight matrix 
   $B_M = V_M^\dagger V_M$ to \emph{a priori} prevent spurious eigenvalues, contrasting with classic \emph{a posteriori} filters \cite{CullumWilloughby1985}. 
   Second, \cref{thm:master} provides a gap-independent analytical tool beyond standard perturbation theory~\cite{Saad2011}.
   \item Jacobi-Davidson methods: These iteratively expand the trial space by solving a preconditioned ``correction equation" \cite{SleijpenVanDerVorst2000}.
   Our analysis of the noise-to-signal ratio ($\epsilon_M / \lambda_m(B_M^m)$) provides a rigorous diagnostic to dynamically control the quality of this preconditioning step, balancing the ``noise" ($\epsilon_M$) of an inexact solve against the ``signal" gain.
   \item Contour integral methods such as FEAST \cite{Polizzi2009}: Here, the quadrature errors from approximating the contour integral
   map directly to additional error matrices $\delta A$ and $\delta B$. Spectral dimension detection can serve to identify and avoid mismatches in trial dimension, a challenge particularly evident in hybrid methods such as DMRG[FEAST]~\cite{Baiardi2022}.
   \item Tensor Network Methods (e.g. DMRG): The discarded weight in a reduced density matrix is a highly successful but heuristic proxy for eigenvalue error~\cite{Schollwoeck2005, Baiardi2020}. Our framework offers a path to make this step rigorous, providing a formal criterion for choosing the bond dimension $m$ and a non-asymptotic bound on the resulting error.
   \item Signal processing routines such as MUSIC and ESPRIT: The analysis of these methods traditionally relies on asymptotic guarantees and probabilistic noise models~\cite{StoicaNehorai1989, RaoHari1993}.
   Our approach enables their embedding into a deterministic, non-asymptotic context. This allows classic stability conditions—such as ``SNR" and ``source separation"—to be mapped directly to the algebraic stability conditions derived in this work.
\end{itemize}

\section*{Acknowledgements}
The author thanks Markus Reiher for his continued support and trust. Special thanks are due to Gian Michele Graf for numerous 
inspiring conversations. Financial support from the Swiss National Science Foundation (Grant No. 200021\_219616) is gratefully acknowledged.

\begin{appendix}

\section{Derivations}

\subsection{Preliminaries}

The proof of the spectral inequalities in \cref{thm:master} relies on the max-min principle for generalized eigenvalues.  
\begin{lemma}[Min-max principle]
   Let $A,B \in \mathbb{C}^{m\times m}$ be self-adjoint and $B$ be positive definite. Let $\lambda_1[A,B]\geq \lambda_2[A,B] \geq \dots \geq \lambda_m[A,B] $  be the eigenvalues of the generalized eigenvalue problem $[A,B]$.
   The $n$-th eigenvalue satisfies:
   \begin{align}
    \lambda_n[A,B]  & = \max_{\mathcal{S}_n} \min_{x \in \mathcal{S}_n} \frac{( x, A x  )}{(x, B x )} \label{eq:maxmin} \\
                 & = \min_{\mathcal{S}_{n-1}} \max_{x \in \mathcal{S}_{n-1}^\perp } \frac{( x, A x  )}{(x, B x )},  \label{eq:minmax}
\end{align}
where $\mathcal{S}_n$ is a $n$-dimensional subspace of $\mathbb{C}^m$. 
\end{lemma}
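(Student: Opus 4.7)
The plan is to reduce the generalized eigenvalue problem $[A,B]$ to a standard self-adjoint eigenvalue problem by exploiting the positive definiteness of $B$, and then to invoke the classical Courant--Fischer theorem. Since $B$ is positive definite, the unique positive definite square root $B^{1/2}$ exists and is invertible, so I define $\tilde{A} := B^{-1/2} A B^{-1/2}$, which is self-adjoint. Under the substitution $y = B^{1/2} x$ the generalized Rayleigh quotient transforms into the ordinary one,
\begin{align*}
\frac{(x, A x)}{(x, B x)} = \frac{(y, \tilde{A} y)}{(y, y)},
\end{align*}
and a direct check shows that $\tilde A y = \lambda y$ is equivalent to $A x = \lambda B x$ with $x = B^{-1/2} y$. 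Hence the eigenvalues of $\tilde A$, ordered decreasingly, coincide with $\lambda_n[A,B]$.

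I would then apply the classical Courant--Fischer theorem to $\tilde A$. For \cref{eq:maxmin}, note that the bijection $x \mapsto B^{1/2} x$ sends the family of $n$-dimensional subspaces of $\mathbb{C}^m$ onto itself, since dimension is preserved by any linear isomorphism. Therefore
\begin{align*}
\max_{\dim \mathcal{S}_n = n} \min_{x \in \mathcal{S}_n} \frac{(x,Ax)}{(x,Bx)} = \max_{\dim \mathcal{T} = n} \min_{y \in \mathcal{T}} \frac{(y,\tilde A y)}{(y,y)} = \lambda_n(\tilde A) = \lambda_n[A,B],
\end{align*}
where $\mathcal{T} = B^{1/2}\mathcal{S}_n$ parameterizes the right-hand family as $\mathcal{S}_n$ varies, giving the first identity of the lemma.

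The subtle step is \cref{eq:minmax}, because $x \mapsto B^{1/2}x$ does not preserve Euclidean orthogonality: $\mathcal{S}_{n-1}^\perp$ and $(B^{1/2}\mathcal{S}_{n-1})^\perp$ differ in general. My fix is an indirect reparameterization. As $\mathcal{S}_{n-1}$ ranges over all $(n-1)$-dimensional subspaces, its Euclidean complement $\mathcal{S}_{n-1}^\perp$ ranges over all $(m-n+1)$-dimensional subspaces, and so does its image $B^{1/2}\mathcal{S}_{n-1}^\perp$. Writing any such $(m-n+1)$-dimensional subspace uniquely as $\mathcal{U}^\perp$ with $\dim \mathcal{U} = n-1$, the min over $\mathcal{S}_{n-1}$ becomes a min over $\mathcal{U}$, and \cref{eq:minmax} reduces to the standard min-max identity applied to $\tilde A$.

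The main obstacle is precisely this treatment of orthogonality: the $B$-geometry and the Euclidean geometry on $\mathbb{C}^m$ differ, so care is required to ensure that the family of subspaces over which the outer min is taken is still the full collection of $(n-1)$-dimensional subspaces after the change of variables. Once the bijection between these subspace families is established, no further analysis is needed and the proof is essentially a transport of the classical Courant--Fischer statement along the similarity $A \mapsto B^{-1/2} A B^{-1/2}$.
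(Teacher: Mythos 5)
Your proof is correct. Note that the paper itself offers no proof of this lemma: it is stated in the appendix preliminaries as a classical fact (the Courant--Fischer / min-max principle for the pencil $[A,B]$), so there is no in-paper argument to compare against. Your reduction via $\tilde A = B^{-1/2} A B^{-1/2}$ and $y = B^{1/2}x$ is the standard route, and you correctly identify and resolve the one genuine subtlety: $B^{1/2}$ does not preserve Euclidean orthogonality, so \cref{eq:minmax} cannot be transported complement-by-complement. Your fix --- observing that $\min_{\mathcal{S}_{n-1}}\max_{x\in\mathcal{S}_{n-1}^\perp}$ is really a minimum over \emph{all} subspaces of dimension $m-n+1$, a family that the invertible map $B^{1/2}$ permutes, after which any such subspace may be rewritten as $\mathcal{U}^\perp$ with $\dim\mathcal{U}=n-1$ --- is exactly right, and the same codimension argument is what makes \cref{eq:maxmin} immediate as well.
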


In our derivations we also make frequent use of Weyl's inequalities. 
\begin{lemma}[Weyl's Inequality]
   Let $A, B \in \mathbb{C}^{m\times m}$ be self-adjoint matrices. For indices $i, j \geq 1$, the eigenvalues of the sum $A+B$ satisfy:
   \begin{align}
      \lambda_{i+j-m}(A + B) \geq \lambda_i(A) + \lambda_j(B) \geq \lambda_{i+j-1}(A+B), \label{eq:Weyl}
   \end{align}
   where $\lambda_j(\cdot)$ denotes the $j$-th largest eigenvalue and terms with indices outside $\{1, \dots, m\}$ are omitted.
\end{lemma}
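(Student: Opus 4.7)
The plan is to derive both inequalities directly from the min--max principle in the preceding lemma, specialized to the ordinary eigenvalue problem ($B = I$, which is certainly positive definite). The key idea is that Weyl's bounds follow purely from the variational characterization of eigenvalues: one constructs invariant subspaces of $A$ and $B$ separately and then combines them via a dimension-counting argument. I would treat the two inequalities in turn, using the two dual forms \cref{eq:maxmin} and \cref{eq:minmax} respectively.

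For the left inequality $\lambda_{i+j-m}(A+B) \geq \lambda_i(A) + \lambda_j(B)$, let $S_A \subset \mathbb{C}^m$ be the span of the top $i$ eigenvectors of $A$, so $\dim S_A = i$ and $(x,Ax) \geq \lambda_i(A)(x,x)$ for every $x \in S_A$. Define $S_B$ analogously with $\dim S_B = j$. The dimension formula yields $\dim(S_A \cap S_B) \geq i + j - m$, and additivity of the Rayleigh quotient gives
\begin{align*}
\frac{(x,(A+B)x)}{(x,x)} \;\geq\; \lambda_i(A) + \lambda_j(B) \qquad \text{for all nonzero } x \in S_A \cap S_B.
\end{align*}
Feeding this subspace into the max--min characterization \cref{eq:maxmin} as a feasible candidate of dimension at least $i+j-m$ produces the desired bound.

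For the right inequality $\lambda_i(A) + \lambda_j(B) \geq \lambda_{i+j-1}(A+B)$, I would switch to the dual form \cref{eq:minmax}. Let $T_A$ be the span of the top $i-1$ eigenvectors of $A$, so that $\max_{x \perp T_A} (x,Ax)/(x,x) = \lambda_i(A)$, and define $T_B$ analogously. Set $T := T_A + T_B$, which satisfies $\dim T \leq (i-1)+(j-1) = i+j-2$. For every $x \in T^\perp$ we simultaneously have $(x,Ax) \leq \lambda_i(A)(x,x)$ and $(x,Bx) \leq \lambda_j(B)(x,x)$, and therefore $(x,(A+B)x) \leq (\lambda_i(A)+\lambda_j(B))(x,x)$. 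Applying \cref{eq:minmax} with this $T$ as a feasible $(i+j-2)$-dimensional subspace completes the bound.

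The main obstacle is essentially combinatorial bookkeeping: matching the index shifts that appear naturally from the dimension count ($i+j-m$ for the intersection, $i+j-2$ for the sum) to the correct eigenvalue indices in \cref{eq:maxmin} and \cref{eq:minmax}, and honoring the convention that terms with indices outside $\{1,\dots,m\}$ are omitted. These boundary regimes are vacuous (if $i+j-m \leq 0$ the left bound is trivial, and if $i+j-1 > m$ the right bound is trivial), so once the indices are correctly tracked there are no analytic subtleties beyond the two symmetric applications of the min--max principle sketched above.
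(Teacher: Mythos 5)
Your proof is correct. The paper itself offers no proof of this lemma---it is stated as a standard preliminary and used as a black box---so there is no internal argument to compare against; what you give is the classical variational proof of Weyl's inequalities, and it is the natural one given that the paper has just stated the min--max principle. Both halves are sound: the subspace-intersection count $\dim(S_A\cap S_B)\ge i+j-m$ feeds a valid candidate into the max--min form \cref{eq:maxmin}, and the sum $T=T_A+T_B$ feeds a valid candidate into the dual form \cref{eq:minmax}. The only pedantic gap is in the second half: $\dim T$ may be strictly less than $i+j-2$, whereas \cref{eq:minmax} minimizes over subspaces of exactly that dimension; you should note that $T$ can be enlarged to an $(i+j-2)$-dimensional subspace $T'$, for which $T'^{\perp}\subset T^{\perp}$, so the maximum over $T'^{\perp}$ is still bounded by $\lambda_i(A)+\lambda_j(B)$. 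Your handling of the boundary regimes (indices outside $\{1,\dots,m\}$ omitted) matches the lemma's convention.
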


\subsection{Derivations of spectral inequalities}
\label{sec:deri_specineq}

\cref{thm:master} is derived from \cref{prop:var_principle}.
\begin{proposition}\label{prop:var_principle}
Let $[A,B]$ be a generalized eigenvalue problem as in \cref{sec:descr}. Assume $[A,B]$ is well conditioned in the sense 
   \begin{align}\label{eq:wellCond2}
      \lambda_m(B) > \lambda_1(\mathcal{N}^{(B)}). 
   \end{align}
 Let $\tilde \lambda_i$ denote the eigenvalues of $[A,B]$ and $\lambda_i$ the eigenvalues of $H$ in $\mathcal{E}$
 in descending order. The difference of the eigenvalues satisfies:
    \begin{align}
 \tilde \lambda_i -\lambda_i \leq \frac{\lambda_1 \left ( \int_{\lambda > \tilde \lambda_i } (\lambda - \tilde \lambda_i) d N^\dagger P^{(H)}(\lambda) N \right ) + \lambda_{\max}^*  ( \delta A - \tilde \lambda_i  \delta B  )}{\lambda_m(B) - \lambda_1(\mathcal{N}^{(B)})}   \label{eq:prop_upper} \\
 \frac{ \lambda_m \left ( \int_{\lambda < \tilde \lambda_i } (\lambda - \tilde \lambda_i) d N^\dagger P^{(H)}(\lambda) N  \right ) +  \lambda_{\min}^*   ( \delta A - \tilde \lambda_i  \delta B  ) }{\lambda_m(B) - \lambda_1(\mathcal{N}^{(B)})} \leq \tilde \lambda_i -\lambda_i    \label{eq:prop_lower}
    \end{align}
\end{proposition}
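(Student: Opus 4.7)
The strategy is to absorb $S^\dagger S$ into a congruence transformation so that the $S$-part of $[A,B]$ becomes an ordinary eigenvalue problem with spectrum $\{\lambda_i\}$, apply Weyl's inequality there, and finally translate the resulting ordinary eigenvalue estimate back into a generalized eigenvalue estimate on $[\mathcal{N}^{(A)} - \tilde\lambda_i \mathcal{N}^{(B)}, S^\dagger S]$ that can then be expanded through the PVM.

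\textbf{Step 1 (congruence).} The well-conditioning assumption \cref{eq:wellCond2}, together with Weyl's inequality \cref{eq:Weyl} applied to $B = S^\dagger S + \mathcal{N}^{(B)}$, gives $\lambda_m(S^\dagger S) \geq \lambda_m(B) - \lambda_1(\mathcal{N}^{(B)}) > 0$. Hence $T := (S^\dagger S)^{1/2}$ is invertible and $\hat S := S T^{-1}$ is an isometry $\mathbb{C}^m \to \mathcal{E}$. I set $A' := T^{-1} A T^{-1}$, $B' := T^{-1} B T^{-1}$, $H' := \hat S^\dagger H \hat S$, and $E'(\mu) := T^{-1}[\mathcal{N}^{(A)} - \mu \mathcal{N}^{(B)}] T^{-1}$. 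The pair $[A',B']$ shares the generalized eigenvalues $\tilde\lambda_i$ with $[A,B]$ by congruence, while $H'$ carries the ordinary eigenvalues $\lambda_1,\dots,\lambda_m$, yielding the clean splitting $A' - \tilde\lambda_i B' = (H' - \tilde\lambda_i I) + E'(\tilde\lambda_i)$.

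\textbf{Step 2 (Weyl).} Sylvester's law of inertia applied to the congruence $A' - \tilde\lambda_i B' \sim (B')^{-1/2} A' (B')^{-1/2} - \tilde\lambda_i I$, whose ordinary eigenvalues are $\tilde\lambda_j - \tilde\lambda_i$, shows $\lambda_i(A' - \tilde\lambda_i B') = 0$. Combining this identity with $\lambda_i(H' - \tilde\lambda_i I) = \lambda_i - \tilde\lambda_i$ and both forms of \cref{eq:Weyl} applied to the decomposition of Step~1 immediately produces $\lambda_m(E'(\tilde\lambda_i)) \leq \tilde\lambda_i - \lambda_i \leq \lambda_1(E'(\tilde\lambda_i))$.

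\textbf{Step 3 (back to generalized form and PVM).} The substitution $x = T y$ identifies $\lambda_1(E'(\tilde\lambda_i))$ with the largest generalized eigenvalue of $[\mathcal{N}^{(A)} - \tilde\lambda_i \mathcal{N}^{(B)}, S^\dagger S]$. Writing $\mathcal{N}^{(A)} - \tilde\lambda_i \mathcal{N}^{(B)} = N^\dagger(H - \tilde\lambda_i)N + (\delta A - \tilde\lambda_i \delta B)$ and decomposing the PVM integral as $Y_\pm := \int_{\lambda \gtrless \tilde\lambda_i}(\lambda - \tilde\lambda_i)\,dN^\dagger P^{(H)}(\lambda)N$ with $Y_+ \geq 0$, $Y_- \leq 0$ yields the matrix upper bound $\mathcal{N}^{(A)} - \tilde\lambda_i \mathcal{N}^{(B)} \leq [\lambda_1(Y_+) + \lambda_{\max}^*(\delta A - \tilde\lambda_i \delta B)]\, I$. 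The elementary estimate $(y, M y)/(y, S^\dagger S y) \leq c/\lambda_m(S^\dagger S)$ whenever $M \leq c I$ with $c \geq 0$ then produces \cref{eq:prop_upper} with denominator $\lambda_m(S^\dagger S)$. The lower bound \cref{eq:prop_lower} is symmetric, using $Y_+ \geq 0$ and $\lambda_{\min}^*$. Finally, substituting $\lambda_m(S^\dagger S) \geq \lambda_m(B) - \lambda_1(\mathcal{N}^{(B)})$ finishes the proof.

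\textbf{Main obstacle.} The most delicate point is the sign bookkeeping when weakening the denominator from $\lambda_m(S^\dagger S)$ to $\lambda_m(B) - \lambda_1(\mathcal{N}^{(B)})$ for the lower bound: its numerator is non-positive, so shrinking the positive denominator makes the ratio more negative, weakening rather than reversing the inequality. Equally, the consistent use of $\lambda_{\max}^*/\lambda_{\min}^*$ rather than $\lambda_1/\lambda_m$ for the indefinite matrix $\delta A - \tilde\lambda_i \delta B$ is essential, since otherwise a sign-definite perturbation could yield a spurious improvement incompatible with the PVM split.
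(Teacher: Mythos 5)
Your proof is correct, but it takes a genuinely different route from the paper. The paper proves \cref{prop:var_principle} by a direct Rayleigh-quotient comparison: it evaluates $(x,S^\dagger HSx)/(x,S^\dagger Sx)$ on the span (or orthogonal complement of the span) of the leading eigenvectors of $[A,B]$, inserts $(x,Ax)\lessgtr\tilde\lambda_i(x,Bx)$, splits the PVM integral, and closes with the min-max/max-min principles \cref{eq:maxmin}--\cref{eq:minmax}. You instead whiten by $T=(S^\dagger S)^{1/2}$, observe via Sylvester's inertia that $\lambda_i(A'-\tilde\lambda_iB')=0$, and reduce the whole statement to the additive Weyl inequality for $(H'-\tilde\lambda_iI)+E'(\tilde\lambda_i)$, before unwhitening to read off $\lambda_1(E')$ and $\lambda_m(E')$ as generalized eigenvalues of $[\mathcal{N}^{(A)}-\tilde\lambda_i\mathcal{N}^{(B)},\,S^\dagger S]$. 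All the delicate points check out: the inertia identity is valid (at most $i-1$ positive and at least $i$ nonnegative eigenvalues of $A'-\tilde\lambda_iB'$), the constants $\lambda_1(Y_+)+\lambda_{\max}^*(\cdot)\geq0$ and $\lambda_m(Y_-)+\lambda_{\min}^*(\cdot)\leq0$ have the right signs to permit weakening the denominator from $\lambda_m(S^\dagger S)$ to $\lambda_m(B)-\lambda_1(\mathcal{N}^{(B)})$, and your bounds coincide with \cref{eq:prop_upper}--\cref{eq:prop_lower}. What your version buys is modularity: the eigenvalue error is isolated in a single perturbation matrix $E'(\tilde\lambda_i)$, and the spectral bookkeeping becomes a one-line application of \cref{eq:Weyl}. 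What the paper's version buys is that the Rayleigh-quotient manipulation carries over almost verbatim to \cref{thm:other}, where the roles of $[A,B]$ and $[S^\dagger HS,S^\dagger S]$ are exchanged and the well-conditioning hypothesis is dropped; your congruence by $(S^\dagger S)^{1/2}$ still needs $S^\dagger S$ invertible, so it would have to be adapted there.
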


\begin{proof}[Proof of \cref{prop:var_principle}]
   By Weyl's inequality \cref{eq:Weyl},
   \begin{align}
      \lambda_m(S^\dagger S) \geq \lambda_m(B) - \lambda_1(\mathcal{N}^{(B)}) > 0.
   \end{align}
   The last inequality follows from assumption\,\cref{eq:wellCond2}. Gram matrices have $\operatorname{rank}(S^\dagger S) = \dim( \operatorname{range}S)$.
   From $ \operatorname{range}S \subseteq \mathcal{E}$ and $\dim( \operatorname{range}S) = \dim(\mathcal{E})$, it follows that the range of $S$ is the spectral subspace $\mathcal{E}$.
   In particular, the eigenvalues of the generalized eigenvalue problem $[S^\dagger H S, S^\dagger S]$ are the eigenvalues of $H$ in $\mathcal{E}$. \\
   
   The rest of the proof relies on a comparison of the eigenvalues of $[A,B]$ with $[S^\dagger H S, S^\dagger S]$ using the min-max theorem. We start with the lower inequality\,\cref{eq:prop_lower}.
   Let $\mathcal{S}_{i-1} \subset \mathbb{C}^m $ be the subspace spanned by the leading $i-1$ generalized eigenvectors of $[A,B]$. 
   For $x \in \mathcal{S}_{i-1}^{\perp}$ arbitrary, we estimate the Rayleigh quotient of $[S^\dagger H S, S^\dagger S]$: 
      \begin{align*}
   \frac{(x,S^\dagger HSx)}{(x,S^\dagger S x)} &= \frac{(x,(A - N^\dagger H N - \delta A) x)}{(x,(B-\mathcal{N}^{(B)})x)} \leq \frac{(x,(\tilde \lambda_i B - N^\dagger H N- \delta A) x)}{(x,(B-\mathcal{N}^{(B)}) x)} \\
        & = \tilde \lambda_i + \frac{(x,(\tilde \lambda_i N^\dagger N + \tilde \lambda_i \delta B  - N^\dagger H N - \delta A) x)}{(x,(B-\mathcal{N}^{(B)})x)} \\
        & = \tilde \lambda_i + \frac{(x, \left (\int (\tilde \lambda_i -\lambda) d N^\dagger P^{(H)}(\lambda) N + \tilde \lambda_i \delta B - \delta A \right ) x)}{(x,(B-\mathcal{N}^{(B)})x)} \\
        & \leq\tilde \lambda_i - \frac{(x, \left (\int_{\lambda < \tilde \lambda_i } (\lambda - \tilde \lambda_i) d N^\dagger P^{(H)}(\lambda) N + \delta A - \tilde \lambda_i \delta B \right ) x)}{(x,(B-\mathcal{N}^{(B)})x)} \\
        & \leq \tilde \lambda_i - \frac{ \lambda_{m}  \left (\int_{\lambda < \tilde \lambda_i } (\lambda - \tilde \lambda_i)d N^\dagger P^{(H)}(\lambda) N + \delta A- \tilde \lambda_i \delta B \right ) }{(x,(B-\mathcal{N}^{(B)})x) }
    \end{align*}
    The first inequality applies $(x,Ax) \leq \tilde \lambda_i (x,Bx)$. In the second, we split the integral into two regions and drop the part $\lambda > \tilde \lambda_i$. The third inequality uses that $B - \mathcal{N}^{(B)}$ is positive definite.
    We continue our estimate with Weyl's inequality:
      \begin{align*}
         \cdots & \leq \tilde \lambda_i - \frac{ \lambda_{m}  \left (\int_{\lambda < \tilde \lambda_i } (\lambda - \tilde \lambda_i)d N^\dagger P^{(H)}(\lambda) N \right )+ \lambda_{m}  \left ( \delta A- \tilde \lambda_i \delta B \right ) }{(x,(B-\mathcal{N}^{(B)})x) }\\
         & \leq \tilde \lambda_i - \frac{ \lambda_m \left (\int_{\lambda < \tilde \lambda_i } (\lambda - \tilde \lambda_i)d N^\dagger P^{(H)}(\lambda) N \right ) +  \lambda_{\min}^* \left ( \delta A- \tilde \lambda_i \delta B \right ) }{\lambda_m(B) - \lambda_1(\mathcal{N}^{(B)}) }
    \end{align*}
    The last inequality uses that the numerator is non-positive (due to $\lambda_{\min}^*(\cdot) \leq 0$ and the negative integral).
    Maximizing the left-hand side over $x$ gives:
    \begin{align*}
 \max_{x \in \mathcal{S}_{i-1}^{\perp}}\frac{(x,S^\dagger HSx)}{(x,S^\dagger S x)} &\leq \tilde \lambda_i - \frac{ \lambda_m \left (\int_{\lambda < \tilde \lambda_i } (\lambda - \tilde \lambda_i)d N^\dagger P^{(H)}(\lambda) N \right ) +  \lambda_{\min}^* \left ( \delta A- \tilde \lambda_i \delta B \right ) }{\lambda_m(B) - \lambda_1(\mathcal{N}^{(B)})}
    \end{align*}
 By the min-max principle\,\cref{eq:minmax}, the left-hand side is bounded from below by $\lambda_i$. Rearranging gives the lower inequality of \cref{prop:var_principle}.\\
 
 For the upper inequality, consider the subspace $\mathcal{S}_{i} $ spanned by the leading $i$ eigenvectors of $[A,B]$. For $x \in \mathcal{S}_{i}$ arbitrary, analogous steps apply:
    \begin{align*}
 \frac{(x,SHSx)}{(x,S^\dagger S x)}  & \geq \tilde \lambda_i - \frac{(x, \left (\int_{\lambda > \tilde \lambda_i } (\lambda - \tilde \lambda_i) d N^\dagger P^{(H)}(\lambda) N + \delta A - \tilde \lambda_i \delta B \right ) x)}{(x,(B-N^\dagger N  - \delta B)x)} \\
        & \geq \tilde \lambda_i - \frac{ \lambda_1\left ( \int_{\lambda > \tilde \lambda_i } (\lambda - \tilde \lambda_i) d N^\dagger P^{(H)}(\lambda) N  \right ) + \lambda_{\max}^* \left ( \delta A- \tilde \lambda_i \delta B \right ) }{\lambda_m(B) - \lambda_1(\mathcal{N}^{(B)})}.
    \end{align*}
 The max-min principle \cref{eq:maxmin} allows us to replace the left-hand side by $\lambda_i$, and the upper inequality of \cref{prop:var_principle} follows.
\end{proof}

\begin{proof}[\textit{From \cref{prop:var_principle} to \cref{thm:master}}]
   Let $[A,B]$ be a generalizes eigenvalue problem as in \cref{prop:var_principle}. We estimate the numerator of the upper bound in Eq.\,\cref{eq:prop_upper}:
    \begin{align*}
        \lambda_1 \Bigl (\int_{\lambda > \tilde \lambda_i }  & (\lambda - \tilde \lambda_i) d N^\dagger P^{(H)}(\lambda) N \Bigr)+ \lambda_{\max}^* ( \delta A- \tilde \lambda_i \delta B  ) \\
        &\leq \Tr \left [ \int_{\lambda > \tilde \lambda_i } (\lambda - \tilde \lambda_i) d N^\dagger P^{(H)}(\lambda) N  \right ] + \lambda_{\max}^* ( \delta A - \tilde \lambda_i  \delta B ) \\
        &\leq   \int_{\lambda > \tilde \lambda_i } (\lambda - \tilde \lambda_i) d \underbrace{\Tr \left [ N^\dagger P^{(H)}(\lambda) N  \right ]}_{=\varepsilon(\lambda)} + \lambda_{\max}^* ( \delta A- \tilde \lambda_i  \delta B )
    \end{align*}
  The first inequality follows from $\lambda_1(A)\leq \Tr A$ for $A$ positive semidefinite and the second from the triangle inequality. The lower bound in Eq.\,\cref{eq:prop_lower} follows analogously. 
\end{proof}

\subsection{Related spectral inequalities}
\label{sec:related_specineq}

If $H$ is a bounded operator, we can also derive a more explicit bound from \cref{prop:var_principle} by using the operator norm.

\begin{corollary}
    \label{cor:bounded}
    Let $[A,B]$ be a generalized eigenvalue problem as in \cref{sec:descr} where the spectral subspace is a band  $\mathcal{E} = \mathcal{E}[a,b]$.
    Assume:
    \begin{enumerate}[label=\roman*)]
        \item  that $H$ is bounded by $E_{\text{min}}, E_{\text{max}} \in \mathbb{R}$ such that for all $x \in \mathcal{Q}(H)$
        \begin{align*}
         E_{\text{min}} \leq \frac{(x,Hx)}{(x,x)}\leq E_{\text{max}}.
        \end{align*}
        \item the generalized eigenvalue problem is well-conditioned as in Eq.\,\cref{eq:wellCond}
        \item  the continuous part of the spectrum of $H$ has no support in $[a,b]$
    \end{enumerate}
    Then the eigenvalues of $[A,B]$ and $H$ in $\mathcal{E}[a,b]$ have
    \begin{align}
        \frac{(E_{\text{min}} - \tilde \lambda_i) \|N_{<}  \| + \lambda_{\min}^* (\delta A - \tilde \lambda_i \delta B )  }{\lambda_m(B) - \lambda_1(\mathcal{N}^{(B)})} \leq \tilde \lambda_i -\lambda_i \leq \frac{(E_{\text{max}} - \tilde \lambda_i) \|N_{>}\| + \lambda_{\max}^* ( \delta A- \tilde \lambda_i  \delta B ) }{\lambda_m(B) - \lambda_1(\mathcal{N}^{(B)})}, \label{eq:band_ineq_bounded}
    \end{align}
    where 
    \begin{align*}
    N_{<} &= \int_{\lambda < a } d N^\dagger P^{(H)}(\lambda) N, \quad \text{and} \quad 
    N_{>} = \int_{\lambda > b }  d N^\dagger P^{(H)}(\lambda) N.
\end{align*}
\end{corollary}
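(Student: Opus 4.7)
The plan is to deduce \cref{cor:bounded} directly from \cref{prop:var_principle} by two manipulations: (a) localize the spectral content of $N$ to the complement of $[a,b]$, and (b) extract the factor $(\lambda-\tilde\lambda_i)$ from the integrand using the $L^\infty$ bound on the spectrum of $H$.

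First I would show that $N^\dagger P^{(H)}(I) N = 0$ for every Borel $I\subset[a,b]$. By assumption iii) the spectrum of $H$ in $[a,b]$ is purely point spectrum, and by the definition $\mathcal{E}=\mathcal{E}[a,b]$ the associated eigenvectors span $\mathcal{E}$; hence $P^{(H)}([a,b]) = P_\mathcal{E}$. Combined with $N=P_{\mathcal{E}^\perp}V$, this gives $P^{(H)}([a,b])\,N = P_\mathcal{E} P_{\mathcal{E}^\perp}V = 0$, and therefore the operator-valued measure $N^\dagger P^{(H)}(\cdot) N$ is supported outside $[a,b]$. Inheriting the implicit assumption $\tilde\lambda_i\in[a,b]$ from \cref{cor:band}, the upper integration domain $\{\lambda>\tilde\lambda_i\}$ in \cref{eq:prop_upper} reduces to $\{\lambda>b\}$, and the lower domain $\{\lambda<\tilde\lambda_i\}$ in \cref{eq:prop_lower} reduces to $\{\lambda<a\}$.

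Second, I use the boundedness of $H$ to convert the spectral weight $(\lambda-\tilde\lambda_i)$ into a scalar constant. On $\{\lambda>b\}$ one has $0 \le \lambda - \tilde\lambda_i \le E_{\max}-\tilde\lambda_i$, yielding as a monotone operator inequality
\[
\int_{\lambda>b}(\lambda-\tilde\lambda_i)\, dN^\dagger P^{(H)}(\lambda)N \;\le\; (E_{\max}-\tilde\lambda_i)\,N_{>}.
\]
Since $N_{>}$ is positive semidefinite, $\lambda_1(N_{>})=\|N_{>}\|$, and inserting this into \cref{eq:prop_upper} produces the upper bound of \cref{eq:band_ineq_bounded}. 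The lower bound is symmetric: on $\{\lambda<a\}$, $E_{\min}-\tilde\lambda_i\le\lambda-\tilde\lambda_i\le 0$ gives
\[
\int_{\lambda<a}(\lambda-\tilde\lambda_i)\, dN^\dagger P^{(H)}(\lambda)N \;\ge\; (E_{\min}-\tilde\lambda_i)\,N_{<},
\]
and applying $\lambda_m$ (monotone in the operator order) yields $(E_{\min}-\tilde\lambda_i)\|N_{<}\|$, using that multiplication by the non-positive scalar $(E_{\min}-\tilde\lambda_i)$ swaps $\lambda_m$ with $\lambda_1=\|\cdot\|$ on the positive semidefinite matrix $N_<$.

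The main obstacle is the identification $P^{(H)}([a,b])=P_\mathcal{E}$: it requires assumption iii) to be strong enough to exclude embedded accumulation points of eigenvalues in $[a,b]$, which belong to the essential spectrum even though not to the continuous spectrum. If such accumulation points are allowed, the eigenvectors merely densely span the range of $P^{(H)}([a,b])$, so a short closure argument in $\Hil$ is needed to recover the full identity. Beyond this subtlety, the argument is a routine sequence of monotone-measure manipulations and the standard identification $\lambda_1(\cdot)=\|\cdot\|$ for positive semidefinite matrices.
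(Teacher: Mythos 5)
Your proof is correct and takes essentially the route the paper intends: \cref{cor:bounded} is stated without a written proof beyond the remark that it follows from \cref{prop:var_principle} ``by using the operator norm,'' and your two steps --- restricting the support of the positive operator-valued measure $N^\dagger P^{(H)}(\cdot)N$ to the complement of $[a,b]$ and then bounding the integrand by $E_{\max}-\tilde\lambda_i$ (resp.\ $E_{\min}-\tilde\lambda_i$) together with the Loewner-monotonicity of $\lambda_1$ and $\lambda_m$ --- are exactly the missing details. Your two side remarks are also apt: the reduction of the integration domains from $\{\lambda\gtrless\tilde\lambda_i\}$ to $\{\lambda>b\}$ and $\{\lambda<a\}$ does require $\tilde\lambda_i\in[a,b]$ (stated as assumption iii) of \cref{cor:band} but omitted here), while the closure subtlety in $P^{(H)}([a,b])=P_{\mathcal{E}}$ is vacuous in this setting because $\dim\mathcal{E}=m<\infty$.
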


We also state an alternative version of \cref{thm:master}.

\begin{theorem}
    \label{thm:other}
   Let $[A,B]$ be a generalized eigenvalue problem as in \cref{sec:descr}. The eigenvalues of $[A,B]$ and $H$ in $\mathcal{E}$ have 
    \begin{align}
 \frac{\int_{\lambda <  \lambda_i } (\lambda - \lambda_i) d \varepsilon(\lambda) + \lambda_{\min}^* (\delta A - \lambda_i \delta B )  }{\lambda_m(B) } \leq \tilde \lambda_i -\lambda_i \leq \frac{\int_{\lambda >  \lambda_i } (\lambda - \lambda_i) d \varepsilon(\lambda)+ \lambda_{\max}^* (\delta A -  \lambda_i \delta B )  }{\lambda_m(B)}. \label{eq:other}
    \end{align}
\end{theorem}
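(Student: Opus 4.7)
I would follow the proof of \cref{prop:var_principle} but swap the roles of the two generalized eigenvalue problems: rather than comparing the signal Rayleigh quotient $(x,S^\dagger HSx)/(x,S^\dagger Sx)$ to $\tilde\lambda_i$ through the min--max for $[A,B]$, I would apply the min--max for $[A,B]$ and bound the signal quotient by $\lambda_i$ through the min--max of $H|_\mathcal{E}$. This swap is exactly what moves the integration bound and the error shift from $\tilde\lambda_i$ to $\lambda_i$, and it replaces the denominator $\lambda_m(B)-\lambda_1(\mathcal{N}^{(B)})$ of \cref{thm:master} by $\lambda_m(B)$.

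For the upper bound I would apply \cref{eq:minmax} for $[A,B]$ with an $(i-1)$-dimensional test subspace $\mathcal{S}_{i-1}\subset\mathbb{C}^m$ containing $\operatorname{range}(S^\dagger P_{i-1})$, where $P_{i-1}$ is the orthogonal projection on $\mathscr{H}$ onto the span of the top $i-1$ eigenvectors of $H$ in $\mathcal{E}$. Every $x\in\mathcal{S}_{i-1}^\perp$ then satisfies $P_{i-1}Sx=0$, so the min--max of $H|_\mathcal{E}$ gives $(x,S^\dagger HSx)\leq\lambda_i(x,S^\dagger Sx)$. Expanding $A-\lambda_i B$ into signal and noise components yields
\begin{align*}
(x,Ax)-\lambda_i(x,Bx)\leq \int(\lambda-\lambda_i)\,d(x,N^\dagger P^{(H)}(\lambda)Nx)+(x,(\delta A-\lambda_i\delta B)x).
\end{align*}
I would then discard the non-positive part $\{\lambda<\lambda_i\}$ of the integral, apply the trace bound $\lambda_1(M)\leq\Tr(M)$ to the resulting positive semidefinite matrix (exactly as in the reduction from \cref{prop:var_principle} to \cref{thm:master}), and bound the indefinite error matrix by $\lambda^*_{\max}(\delta A-\lambda_i\delta B)\|x\|^2$. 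Because the numerator bound is non-negative, dividing through by $(x,Bx)\geq\lambda_m(B)\|x\|^2$ preserves the inequality; taking $\max_{x\in\mathcal{S}_{i-1}^\perp}$ and invoking the min--max for $[A,B]$ concludes.

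The lower bound follows by the symmetric construction via the max--min form \cref{eq:maxmin} with an $i$-dimensional $\mathcal{S}_i\subseteq\{x\in\mathbb{C}^m: Sx\in\operatorname{span}\{\phi_1,\ldots,\phi_i\}\}$, giving $(x,S^\dagger HSx)\geq\lambda_i(x,S^\dagger Sx)$. The trace trick is applied to the positive semidefinite matrix $-\int_{\lambda<\lambda_i}(\lambda-\lambda_i)\,dN^\dagger P^{(H)}(\lambda)N$, the error matrix contributes $\lambda^*_{\min}(\delta A-\lambda_i\delta B)\|x\|^2$, and because the numerator is now non-positive, dividing by $(x,Bx)\geq\lambda_m(B)\|x\|^2$ still goes in the required direction. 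The only delicate step in the whole argument is this sign-and-denominator accounting; everything else is a direct transposition of the manipulations already carried out in the proof of \cref{prop:var_principle} with $\lambda_i$ substituted for $\tilde\lambda_i$, and crucially it does not invoke the well-conditioning hypothesis \cref{eq:wellCond}, consistent with its absence from the statement of \cref{thm:other}.
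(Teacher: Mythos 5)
Your proposal is correct and follows essentially the same route as the paper's proof sketch: exchange the roles of $[A,B]$ and $[S^\dagger H S, S^\dagger S]$, shift by $\lambda_i$ instead of $\tilde\lambda_i$, split the PVM integral, apply the trace bound and the $\lambda^*_{\max}/\lambda^*_{\min}$ estimates, and divide by $(x,Bx)\geq\lambda_m(B)\|x\|^2$ with the sign check you describe. Your choice of test subspaces through the eigenvectors of $H$ in $\mathcal{E}$ (rather than the generalized eigenvectors of $[S^\dagger H S, S^\dagger S]$ used in the paper's sketch) is if anything slightly more careful, since it remains valid when $S$ is rank-deficient, consistent with the absence of the well-conditioning hypothesis \cref{eq:wellCond}.
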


\cref{thm:other} no longer requires the spectral signal to be well-conditioned. On the other hand, the error bounds of \cref{thm:master} are given in terms of the approximate eigenvalues $\tilde \lambda_i$ rather than the unknown eigenvalues $\lambda_i$ as in \cref{thm:other}.

\begin{proof}[Proof sketch for \cref{thm:other}]
   The proof is analogous to that of \cref{prop:var_principle}, but with the roles of $[A,B]$ and $[S^\dagger H S, S^\dagger S]$ effectively exchanged. 
   For the upper bound, we use the subspace $\mathcal{S}_i$ spanned by the leading $i$ eigenvectors of $[S^\dagger H S, S^\dagger S]$ and apply the max-min principle \cref{eq:maxmin} to $[A,B]$:
   \begin{align*}
    \frac{(x,Ax)}{(x,Bx)} &= \frac{(x, ( S^\dagger H S + N^\dagger H N + \delta A)x)}{(x,Bx)} \\
    &\leq \frac{(x, ( \lambda_i S^\dagger S + N^\dagger H N + \delta A)x)}{(x,Bx)} \\
    &= \lambda_i + \frac{(x, ( N^\dagger H N - \lambda_i N^\dagger N + \delta A - \lambda_i \delta B )x)}{(x,Bx)} \\
    &= \lambda_i + \frac{(x, \left (\int (\lambda - \lambda_i) d N^\dagger P^{(H)}(\lambda) N + \delta A - \lambda_i \delta B \right ) x)}{(x,Bx)}.
   \end{align*}
   The remainder of the proof is analogous to the steps used in \cref{prop:var_principle} and \cref{thm:master}.
\end{proof}

\end{appendix}

\bibliographystyle{unsrt}
\bibliography{references}

\end{document}